\colorlet{darkblue}{blue!55!black}
\colorlet{darkcyan}{cyan!50!black}
\colorlet{darkgreen}{green!60!black}
\def\eqref#1{\textcolor{darkblue}{(\ref{#1})}}
\let\oldequation\equation
\let\oldendequation\endequation
\renewenvironment{equation}{\linenomathNonumbers\oldequation}{\oldendequation\endlinenomath}
\let\expandafter\oldequationstar\csname equation*\endcsname
\let\expandafter\oldendequationstar\csname endequation*\endcsname
\renewenvironment{equation*}{\linenomathNonumbers\oldequationstar}{\oldendequationstar\endlinenomath}
\let\oldalign\align
\let\oldendalign\endalign
\let\expandafter\oldalignstar\csname align*\endcsname
\let\expandafter\oldendalignstar\csname endalign*\endcsname
\renewenvironment{align*}{\linenomathNonumbers\oldalignstar}{\oldendalignstar\endlinenomath}
\theoremstyle{plain}
\newtheorem{theorem}{Theorem}[section]
\newtheorem{lemma}[theorem]{Lemma}
\newtheorem{corollary}[theorem]{Corollary}
\newtheorem{proposition}[theorem]{Proposition}
\theoremstyle{definition}
\newtheorem{remark}[theorem]{Remark}
\newtheorem*{ack}{Acknowledgments}
\newtheorem*{notation}{Notation}
\numberwithin{equation}{section}
\numberwithin{theorem}{section}
\title[Measuring rationality of Schwede--Takagi pairs]{Measuring rationality of Schwede--Takagi pairs}
\author[P.~Lank]{Pat Lank}
\address{P.~Lank,
Dipartimento di Matematica “F. Enriques”, Universit\`{a} degli Studi di Milano, Via Cesare
Saldini 50, 20133 Milano, Italy}
\email{plankmathematics@gmail.com}
\author[P. ~McDonald]{Peter M. McDonald}
\address{P. ~McDonald, 
Department of Mathematics,
Simon Fraser University,
Burnaby, BC, V5A 1S6,
Canada
}
\email{pma94@sfu.ca}
\author[S.~Venkatesh]{Sridhar Venkatesh}
\address{S.~Venkatesh,
Department of Mathematics,
University of California, Los Angeles, 
Los Angeles, CA, 90095
U.S.A.}
\email{srivenk@math.ucla.edu}
\date{\today}
\keywords{Rational singularities, Schwede--Takagi pairs, generation for triangulated categories, local complete intersection singularities}
\subjclass[2020]{14F08 (primary), 14B05, 14F17, 18G80} 
\begin{document}
    
\begin{abstract}
    We begin by giving a derived characterization of rational singularities for pairs in the sense of Schwede--Takagi. This characterization extends a characterization of rational singularities due to Lank--Venkatesh to pairs on normal varieties over fields of characteristic zero. As an application, we introduce a categorical invariant that measures the failure of rationality for pairs on affine varieties that are locally complete intersections.
\end{abstract}

\maketitle

\section{Introduction}
\label{sec:intro}

Throughout, by a \textit{variety} we mean a separated integral scheme $Y$ of finite type over a field of characteristic zero. The study of pairs $(Y,\mathcal{I}^c)$, where $Y$ is a variety, $\mathcal{I}$ is a coherent ideal sheaf on $Y$, and $c\in \mathbf{R}_{\geq 0}$, naturally arises in the minimal model program. In \cite{Schwede/Takagi:2008}, the notion of rational singularities was extended to pairs. 

Such extensions use a notion of \textit{log resolution} of a pair $(Y,\mathcal{I}^c)$. This is a proper birational morphism $f\colon \widetilde{Y}\to Y$ from a regular scheme such that $\mathcal{I} \cdot \mathcal{O}_{\widetilde{Y}} = \mathcal{O}_{\widetilde{Y}} ( -G)$ is a line bundle and $\operatorname{exc}(f)\cup \operatorname{supp}(G)$ is a simple normal crossings divisor. See \cite{Schwede/Takagi:2008, Kollar:1997,Kollar/Mori:2008} for details. 

Now, a pair $(Y,\mathcal{I}^c)$ has \textit{rational singularities} if the natural map $\mathcal{O}_Y \to \mathbf{R}f_\ast \mathcal{O}_{\widetilde{Y}} (\lfloor c \cdot G \rfloor)$ is an isomorphism for $f\colon \widetilde{Y} \to Y$ a log resolution of $(Y,\mathcal{I}^c)$ with $\mathcal{I}\cdot \mathcal{O}_{\widetilde{Y}} =  \mathcal{O}_{\widetilde{Y}} (-G)$. A key result, \cite[Theorem 3.11]{Schwede/Takagi:2008}, says that having rational singularities is equivalent to the natural map $\mathcal{O}_Y \to \mathbf{R}f_\ast \mathcal{O}_{\widetilde{Y}} (\lfloor c \cdot G \rfloor)$ splitting. In our work, we give a different characterization.

One approach to studying objects of $D^b_{\operatorname{coh}}$ is to use a concept of generation in triangulated categories and the associated numerics \cite{Bondal/VandenBergh:2003,Avramov/Buchweitz/Iyengar/Miller:2010}. Roughly speaking, given objects $E, G \in \mathcal{T}$ in a triangulated category, the \textit{level} of $E$ with respect to $G$, denoted $\operatorname{level}^G (E)$, is defined as follows. It is the smallest integer $n \geq 0$ such that $E$ can be obtained from $G$ using a finite number of shifts, direct summands, finite direct sums, and at most $n$ cones; it is set to $\infty$ otherwise.

These techniques have been used to give derived characterizations of singularities. For example, an affine scheme $X$ is locally a complete intersection if, and only if, for every $E \in D^b_{\operatorname{coh}}(X)$ there exists a nonzero $P \in \operatorname{Perf}(X)$ such that $\operatorname{level}^E (P) < \infty$ \cite{Pollitz:2019}. Other characterizations include rational singularities \cite{Lank/Venkatesh:2026} and Frobenius splitting \cite{BILMP:2023}; in fact, $\operatorname{level}$ was shown to measure the failure of such singularities in loc.\ cit. 

This brings attention to our first result.

\begin{theorem}
    \label{thm:schwede_takagi_pairs}
    Consider a pair $(Y,\mathcal{I}^c)$ with $Y$ normal. Let $f\colon \widetilde{Y}\to Y$ be a log resolution of $(Y,\mathcal{I}^c)$. Then $(Y,\mathcal{I}^c)$ has rational singularities if, and only if, $\mathcal{O}_Y \in \langle \mathbf{R} f_\ast \mathcal{O}_{\widetilde{Y}}(\lfloor c \cdot G \rfloor) \rangle_1$.
\end{theorem}

The singularities studied in \cite{Lank/Venkatesh:2026,BILMP:2023,DeDeyn/Lank/Lank/ManaliRahul/Venkatesh:2026} involve natural morphisms between structure sheaves, which are the units for derived push/pull adjunctions. However, the natural morphism $\mathcal{O}_Y \to \mathbf{R}f_\ast \mathcal{O}_{\widetilde{Y}} (\lfloor c \cdot G \rfloor)$ for rational pairs need not arise as the unit morphism for a pair of adjoint
functors.  So, our proof of \Cref{thm:schwede_takagi_pairs} requires a different strategy than what appears in \cite{Lank/Venkatesh:2026,BILMP:2023,DeDeyn/Lank/Lank/ManaliRahul/Venkatesh:2026}. We work around this by passing to the completions of the stalks, where we use local algebra to show that a characterization is possible. See proof of \Cref{thm:schwede_takagi_pairs} for details. Furthermore, \Cref{thm:schwede_takagi_pairs} extends \cite[Theorem 3.13]{Lank/Venkatesh:2026} to pairs on normal varieties.

Now, the question remains: Can we use $\operatorname{level}$ to measure the failure of rationality for pairs? In particular, we focus on $\operatorname{level}^{\mathbf{R} f_\ast \mathcal{O}_{\widetilde{Y}}(\lfloor c \cdot G \rfloor) } (\mathcal{O}_Y)$. One nuance in our setting is that this level may be infinite. Compare this to \cite{Lank/Venkatesh:2026,BILMP:2023,DeDeyn/Lank/Lank/ManaliRahul/Venkatesh:2026}, where the levels investigated are guaranteed to be finite due to the presence of so-called `nil-faithful algebras' in the sense of \cite[Definition 3.16]{Balmer:2016} (see also \cite[Definition 3.18]{Mathew:2016} for the $\infty$-categorical analog which is called `descendable algebras'). In particular, $\mathbf{R} f_\ast \mathcal{O}_{\widetilde{Y}}(\lfloor c \cdot G \rfloor)$ is not necessarily a descendable algebra, and so it remains an interesting question of when $\operatorname{level}^{\mathbf{R} f_\ast \mathcal{O}_{\widetilde{Y}}(\lfloor c \cdot G \rfloor) } (\mathcal{O}_Y)$ is even finite.

In what follows, we find a setting where finiteness holds. Let $f\colon \widetilde{Y}\to Y$ be a log resolution of $(Y,\mathcal{I}^c)$. Recall that the coherent sheaf is $f_\ast (\mathcal{O}_{\widetilde{Y}}(- \lfloor c \cdot G \rfloor) \otimes \omega_{\widetilde{Y}})$ is called the \textit{multiplier submodule} of $(Y,\mathcal{I}^c)$. It is denoted $\mathcal{J}(\omega_Y,\mathcal{I}^c)$. In the case $Y$ is Cohen--Macaulay, we show the value $\operatorname{level}^{\mathbf{R} f_\ast \mathcal{O}_{\widetilde{Y}}(\lfloor c \cdot G \rfloor)} (\mathcal{O}_Y)$ coincides with $\operatorname{level}^{\mathcal{J}(\omega_Y,\mathcal{I}^c)} (\omega_Y)$, see \Cref{lem:rationality_CM_via_multiplier_submodule}. 
This brings attention to the next result.

\begin{proposition}
    \label{prop:pairs}
    Consider a pair $(Y,\mathcal{I}^c)$ where $Y$ is normal, quasi-affine, and locally a complete intersection. Then $\operatorname{level}^{\mathcal{J}(\omega_Y, \mathcal{I}^c)} (\omega_Y)$ is finite. In particular, $(Y,\mathcal{I}^c)$ has rational singularities if, and only if, $\operatorname{level}^{\mathcal{J}(\omega_Y, \mathcal{I}^c)} (\omega_Y)=1$.
\end{proposition}

Let us focus on the case of normal affine varieties which are local complete intersections. Our motivation, as mentioned earlier, comes from the work of Pollitz \cite{Pollitz:2019}. We leverage the homotopical characterization for affine varieties that are local complete intersections to show that $\operatorname{level}^{\mathcal{J}(\omega_Y, \mathcal{I}^c)} (\omega_Y)$ is always finite. Hence, in such cases, we show that $\operatorname{level}^{\mathcal{J}(\omega_Y, \mathcal{I}^c)} (\omega_Y)$ measures the failure of the pair to be rational. Moreover, since $Y$ is affine, this value becomes computable using Macaulay2 packages such as \texttt{ThickSubcategories} \cite{Grifo/Letz/Pollitz:2025}.

It is important to clarify what these values indicate. If the pair has rational singularities, then so must $Y$ (see \cite[Corollary 3.13]{Schwede/Takagi:2008}). However, it may happen that $Y$ has rational singularities whereas the pair does not (e.g.\ \cite[Example 4.6]{Schwede/Takagi:2008}). Thus, these values measure the failure of the pair to be rational, quantifying a new relationship between the geometry of the ideal and that of the ambient variety by using the derived category. We believe these numerics to be valuable steps towards studying singularities of pairs via derived categorical methods.

In \Cref{app:kollar_kovacs_pairs}, we discuss a variation of our results in the setting of pairs \`{a} la Koll\'{a}r--Kov\'{a}cs \cite{Kollar:2013}. Specifically, we prove a version of \Cref{thm:schwede_takagi_pairs} for such pairs, see \Cref{thm:kollar_kovac_pairs}. Also, for lack of reference, we record a splitting criteria for rationality of such pairs that has been known to experts, see \Cref{prop:kovacs_splitting_rational_pairs}. 

\begin{notation}
    Let $X$ be a Noetherian scheme. Denote by $D(X):=D(\operatorname{Mod}(X))$ for the derived category of $\mathcal{O}_X$-modules; $D_{\operatorname{qc}}(X)$ for the strictly full subcategory of $D(X)$ consisting of complexes with quasi-coherent cohomology; $D_{\operatorname{coh}}^b(X)$ for the strictly full subcategory of $D(X)$ consisting of complexes having bounded and coherent cohomology; and $\operatorname{Perf}(X)$ for the strictly full subcategory of $D(X)$ consisting of perfect complexes. If $X = \operatorname{Spec}(A)$ is affine, then we at times write $D_{\operatorname{qc}}(A)$ for $D_{\operatorname{qc}}(X)$; similar conventions occur for the other categories. 
\end{notation}

\begin{ack}
   Lank was supported by the National
   Science Foundation under Grant No.\ DMS-2302263 and the ERC Advanced Grant 101095900-TriCatApp. Venkatesh was supported by the National
   Science Foundation under Grant No.\ DMS-2301463 and by the Simons Collaboration grant \textit{Moduli of Varieties}. The authors thank Takumi Murayama, S\'{a}ndor Kov\'{a}cs, and Karl Schwede for useful discussions on earlier versions of our work. Additionally, Lank thanks the University of Michigan for their hospitality during a visit for which part of this work was developed. Lastly, we greatly appreciate comments and suggestions from the referee.
\end{ack}

\section{Preliminaries}
\label{sec:prelim}

\subsection{Generation}
\label{sec:prelim_generation}

We discuss a notion of generation for triangulated categories and refer the reader to \cite{Bondal/VandenBergh:2003, Avramov/Buchweitz/Iyengar/Miller:2010}. Let $\mathcal{T}$ be a triangulated category with shift functor $[1]\colon \mathcal{T} \to \mathcal{T}$. Suppose $\mathcal{S}$ is a subcategory of $\mathcal{T}$. We say $\mathcal{S}$ is \textbf{thick} if it is a triangulated subcategory of $\mathcal{T}$ which is closed under direct summands. The smallest thick subcategory containing $\mathcal{S}$ in $\mathcal{T}$ is denoted by $\langle \mathcal{S} \rangle$; if $\mathcal{S}$ consists of a single object $G$, then $\langle \mathcal{S}\rangle$ will be written as $\langle G\rangle$. Denote by $\operatorname{add}(\mathcal{S})$ for the smallest strictly full (i.e.\ closed under isomorphisms) subcategory of $\mathcal{T}$ containing $\mathcal{S}$ which is closed under shifts, finite coproducts, and direct summands. Inductively, let
\begin{itemize}
    \item $\langle \mathcal{S} \rangle_0$ consists of all objects in $\mathcal{T}$ isomorphic to the zero objects
    \item $\langle \mathcal{S} \rangle_1 := \operatorname{add}(\mathcal{S})$.
    \item $\langle \mathcal{S} \rangle_n := \operatorname{add} \{ \operatorname{cone}(\phi) \colon \phi \in \operatorname{Hom}_{\mathcal{T}} (\langle \mathcal{S} \rangle_{n-1}, \langle \mathcal{S} \rangle_1) \}$.
\end{itemize}
It is straightforward to check that $\langle \mathcal{S} \rangle = \cup^\infty_{n=0} \langle \mathcal{S} \rangle_n$. If $E\in \mathcal{T}$, then the \textbf{level} of $E$ with respect $\mathcal{S}$, denoted $\operatorname{level}^\mathcal{S} (E)$, is defined as the smallest $n\geq 0$ such that $E\in \langle \mathcal{S} \rangle_n$; it is set to $\infty$ if there is no such $n$. We denote $\langle \mathcal{S}\rangle_n$ by $\langle G\rangle_n$ if $\mathcal{S}$ consists of a single object $G$; similarly for $\operatorname{level}$.

\subsection{Dualizing complexes}
\label{sec:prelim_dualizing_complexes}

We give a recap on dualizing complexes for schemes, and refer to \cite[\href{https://stacks.math.columbia.edu/tag/0DWE}{Tag 0DWE}]{StacksProject} or \cite{Hartshorne:1966}. Let $X$ be a Noetherian scheme. Suppose $R$ is a Noetherian ring. An object $\omega^\bullet_R$ in $D(R)$ is called a \textbf{dualizing complex} if the following conditions are satisfied: $\omega^\bullet_R$ has finite injective dimension, $\mathcal{H}^i(\omega^\bullet_R)$ is a finite $R$-module for all $i$, and $R \xrightarrow{ntrl.} \mathbf{R}\operatorname{Hom}_R (\omega^\bullet_R, \omega^\bullet_R)$ is an isomorphism in $D(R)$. This extends naturally to schemes as follows: an object $K$ of $D_{\operatorname{qc}}(X)$ is called a \textbf{dualizing complex} if it is affine locally a dualizing complex.

Let $K$ be a dualizing complex for $X$. Then $K$ belongs to $D_{\operatorname{coh}}^b (X)$ and the functor $\operatorname{\mathbf{R}\mathcal{H}\! \mathit{om}}_{\mathcal{O}_X}(-,K)\colon D_{\operatorname{coh}}^b (X) \to D_{\operatorname{coh}}^b (X)$ is an antiequivalence of triangulated categories. There are many useful relations among dualizing complexes,  derived sheaf hom functors, and derived pushforward functors. This falls under the term of `duality'. See e.g.\ \cite[\href{https://stacks.math.columbia.edu/tag/0DWE}{Tag 0DWE}]{StacksProject}. As a convention in our work, if $f\colon X \to \operatorname{Spec}(k)$ is a separated morphism of finite type to a field, we set $\omega_X^\bullet := f^! \mathcal{O}_{\operatorname{Spec}}(k)$ with $f^!$ the `upper shriek' functor. See \cite[\href{https://stacks.math.columbia.edu/tag/0AU3}{Tag 0AU3}]{StacksProject} for details.

\subsection{Schwede--Takagi rationality}
\label{sec:prelim_schwede_takagi}

We recall the notion of `rational pairs' introduced in \cite{Schwede/Takagi:2008}. A \textbf{pair} is a tuple $(Y,\mathcal{I}^c)$ where $Y$ is a variety, $\mathcal{I}$ is a coherent ideal sheaf on $Y$, and $c\in \mathbf{R}_{\geq 0}$. 
A \textbf{log resolution} for $(Y,\mathcal{I}^c)$ is a proper birational morphism $f\colon \widetilde{Y}\to Y$ from a smooth variety such that $\operatorname{exc}(f)$ is a divisor, $\mathcal{I} \cdot \mathcal{O}_{\widetilde{Y}} = \mathcal{O}_{\widetilde{Y}} ( -G)$ is an invertible sheaf with $G$ an effective divisor, and $\operatorname{exc}(f) + G$ is a simple normal crossings divisor.
Here $\operatorname{exc}(f)$ denotes the exceptional locus of $f$, which is defined as the closed subset $\widetilde{Y}\setminus U$ for $U$ the largest open subscheme over which $f$ is an isomorphism. 
We say $(Y,\mathcal{I}^c)$ has \textbf{rational singularities} if $\mathcal{O}_Y \xrightarrow{ntrl.} \mathbf{R}f_\ast \mathcal{O}_{\widetilde{Y}} (\lfloor c \cdot G \rfloor)$ is an isomorphism for $f\colon \widetilde{Y} \to Y$ a log resolution of $(X,\mathcal{I}^c)$ with $\mathcal{I}\cdot \mathcal{O}_{\widetilde{Y}} =  \mathcal{O}_{\widetilde{Y}} (-G)$. From \cite[Proposition 3.4]{Schwede/Takagi:2008}, the notion of rationality in their sense is independent of the log resolution for $(Y,\mathcal{I}^c)$. As a general fact for any pair, \cite[Lemma 3.5]{Schwede/Takagi:2008} states $\mathbf{R}^j f_\ast (\mathcal{O}_{\widetilde{Y}}(- \lfloor c \cdot G \rfloor) \otimes^{\mathbf{L}} \omega_{\widetilde{Y}}) = 0$ for $j>0$. Its $0$-th cohomology sheaf is $f_\ast (\mathcal{O}_{\widetilde{Y}}(- \lfloor c \cdot G \rfloor) \otimes \omega_{\widetilde{Y}})$, which is called the \textbf{multiplier submodule} of $(Y,\mathcal{I}^c)$ and is denoted $\mathcal{J}(\omega_Y,\mathcal{I}^c)$.

\subsection{Krull--Schmidt categories}
\label{sec:prelim_krull_schmidt}

We discuss ideas related to Krull--Schmidt categories. See e.g.\ \cite{Atiyah:1956, Walker/Warfield:1976}. Consider an additive category $\mathcal{C}$. We say $\mathcal{C}$ is \textbf{Krull-Schmidt} when every object in $\mathcal{C}$ decomposes into a finite coproduct of objects having local endomorphism rings. Here, an object of $\mathcal{C}$ is said to be \textbf{indecomposable} when it is not isomorphic to a direct sum of two nonzero objects. In fact, if $\mathcal{C}$ is a Krull-Schmidt category, then every object decomposes (uniquely up to permutations) into a finite coproduct of indecomposables, see \cite[Theorem 4.2]{Krause:2015}.

Let $R$ be a complete Noetherian local ring and an $R$-linear abelian category $\mathcal{A}$. We say $\mathcal{A}$ is \textbf{$\operatorname{Ext}$-finite} when the $R$-module $\operatorname{Ext}^n (A,B)$ is finitely generated for all objects $A,B$ in $\mathcal{A}$ and $n\geq 0$. It is useful to note that $\mathcal{A}$ is $\operatorname{Ext}$-finite over $R$ if, and only if, $D^b (\mathcal{A})$ is $\operatorname{Hom}$-finite over $R$. In fact, if $\mathcal{A}$ is $\operatorname{Ext}$-finite (over $R$), then $D^b (\mathcal{A})$ is a Krull-Schmidt category, see \cite[Corollary B]{Le/Chen:2007}.

The following was originally observed for the category of coherent sheaves on a proper variety over an algebraically closed field \cite[Theorem 2]{Atiyah:1956}, but generalized in \cite[Lemma 2.7]{Lank/Venkatesh:2026}.

\begin{lemma}
    \label{lem:krull_schmidt}
    If $X$ is a proper scheme over a Noetherian complete local ring $R$, then both $\operatorname{coh}X$ and $D^b_{\operatorname{coh}}(X)$ are Krull-Schmidt categories. Additionally, if $X$ is integral, then $\mathcal{O}_X$ is an indecomposable object in both categories.
\end{lemma}

\begin{remark}
    Let $\mathcal{T}$ be a Krull-Schmidt triangulated category and $E,G\in \mathcal{T}$. If $E\in \langle G \rangle_1$ and $E$ is indecomposable, then $E$ is a direct summand of $G$ up to a possible shift. Indeed, use that $G$ is a finite coproduct of indecomposables.
\end{remark}

\section{Results}
\label{sec:results}

Throughout, let $(Y,\mathcal{I}^c)$ be a pair where $Y$ is normal. We start by proving the main result.

\begin{proof}
    [Proof of \Cref{thm:schwede_takagi_pairs}]
    If $(Y,\mathcal{I}^c)$ has rational singularities, then $\mathcal{O}_Y \in \langle \mathbf{R} f_\ast \mathcal{O}_{\widetilde{Y}}(\lfloor c \cdot G \rfloor) \rangle_1$ because the category is closed under isomorphisms. So, we prove the converse direction. Before proving this, we make an observation. If $\mathcal{O}_Y \in \langle \mathbf{R} f_\ast \mathcal{O}_{\widetilde{Y}}(\lfloor c \cdot G \rfloor) \rangle_1$, then \cite[Theorem 3.13]{Lank/Venkatesh:2026} tells us $Y$ has rational singularities. Consequently, $Y$ must be Cohen--Macaulay, and so, $\mathcal{H}^j (\omega^\bullet_Y)\cong 0$ for $j\not=-\dim Y$.

    Now, we return to the proof. First, note that $\mathcal{O}_Y \xrightarrow{ntrl.} f_\ast \mathcal{O}_{\widetilde{Y}}(\lfloor c \cdot G \rfloor)$ is an isomorphism via \cite[\href{https://stacks.math.columbia.edu/tag/0AVS}{Tag 0AVS}]{StacksProject} as it is a nonzero morphism between torsion free sheaves of rank one which is an isomorphism in codimension one with $\mathcal{O}_Y$ being $(S_2)$. Thus, it suffices to show that $\mathbf{R}^j f_\ast \mathcal{O}_{\widetilde{Y}}(\lfloor c \cdot G \rfloor) = 0$ for $j>0$. 
    We claim that we just need to check that, for any $p\in Y$, $t^\ast \mathbf{R} f_\ast \mathcal{O}_{\widetilde{Y}}(\lfloor c \cdot G \rfloor)$ is concentrated in degree zero where $t\colon \operatorname{Spec}(\widehat{\mathcal{O}}_{Y,p}) \to Y$ is the natural morphism and $\widehat{\mathcal{O}}_{Y,p}$ is the completion of $\mathcal{O}_{Y,p}$ at its maximal ideal. To see why, note that this implies the stalk of $\mathbf{R} f_\ast \mathcal{O}_{\widetilde{Y}}(\lfloor c \cdot G \rfloor)$ at $p$ is concentrated in degree zero as completion is faithfully flat. Therefore, if $t^\ast \mathbf{R} f_\ast \mathcal{O}_{\widetilde{Y}}(\lfloor c \cdot G \rfloor)$ is concentrated in degree zero for all $p\in Y$, then $\mathbf{R}^j f_\ast \mathcal{O}_{\widetilde{Y}}(\lfloor c \cdot G \rfloor) = 0$ for $j>0$.

    For the remainder of the proof, fix $p\in Y$ and the morphism $t\colon \operatorname{Spec}(\widehat{\mathcal{O}}_{Y,p}) \to Y$. Recall from \cite[Lemma 3.5]{Schwede/Takagi:2008} that if $j\not=0$,
    \begin{displaymath}
        \mathbf{R}^j f_\ast (\mathcal{O}_{\widetilde{Y}}(- \lfloor c \cdot G \rfloor) \otimes^{\mathbf{L}} \omega_{\widetilde{Y}})\cong 0.
    \end{displaymath}
    In fact, the $0$-th cohomology is $f_\ast \mathcal{O}_{\widetilde{Y}}(- \lfloor c \cdot G \rfloor \otimes^{\mathbf{L}} \omega_{\widetilde{Y}})$, which is torsion free of rank one via \cite[Proposition 7.4.5]{EGAI:1971} and the fact $\mathcal{O}_{\widetilde{Y}}(- \lfloor c \cdot G \rfloor)$ is a line bundle. Using the natural isomorphism 
    \begin{displaymath}
        \mathbf{R} f_\ast (\mathcal{O}_{\widetilde{Y}}(- \lfloor c \cdot G \rfloor) \otimes^{\mathbf{L}} \omega_{\widetilde{Y}}) \to \operatorname{\mathbf{R}\mathcal{H}\! \mathit{om}} (\mathbf{R}f_\ast \mathcal{O}_{\widetilde{Y}} (\lfloor c \cdot G \rfloor) , \omega^\bullet_Y)[-\dim Y],
    \end{displaymath}
    we see that the only nontrivial cohomology of $\operatorname{\mathbf{R}\mathcal{H}\! \mathit{om}} (\mathbf{R}f_\ast \mathcal{O}_{\widetilde{Y}} (\lfloor c \cdot G \rfloor) , \omega^\bullet_Y)$ occurs in degree $-\dim Y$ and is torsion free of rank one.

    Denote by $\omega^\bullet_{\operatorname{Spec}(\widehat{\mathcal{O}}_{Y,p})} := t^\ast \omega^\bullet_Y$, which is a dualizing complex by \cite[\href{https://stacks.math.columbia.edu/tag/0A7G}{Tag 0A7G} \& \href{https://stacks.math.columbia.edu/tag/0DWD}{Tag 0DWD}]{StacksProject}. There is a string of isomorphisms:
    \begin{displaymath}
        \begin{aligned}
            t^\ast  & \operatorname{\mathbf{R}\mathcal{H}\! \mathit{om}} (\mathbf{R}f_\ast \mathcal{O}_{\widetilde{Y}} (\lfloor c \cdot G \rfloor) , \omega^\bullet_Y) [-\dim Y]
            \\&\cong \operatorname{\mathbf{R}\mathcal{H}\! \mathit{om}} (t^\ast \mathbf{R}f_\ast \mathcal{O}_{\widetilde{Y}} (\lfloor c \cdot G \rfloor) , t^\ast \omega^\bullet_Y)[-\dim Y] && \textrm{(\cite[Proposition 22.70]{Gortz/Wedhorn:2023})}
            \\&\cong \operatorname{\mathbf{R}\mathcal{H}\! \mathit{om}} (t^\ast \mathbf{R}f_\ast \mathcal{O}_{\widetilde{Y}} (\lfloor c \cdot G \rfloor) , \omega^\bullet_{\operatorname{Spec}(\widehat{\mathcal{O}}_{Y,p})})[-\dim Y].
        \end{aligned}
    \end{displaymath}
    Consequently, we know that 
    \begin{displaymath}
        \operatorname{\mathbf{R}\mathcal{H}\! \mathit{om}} (t^\ast \mathbf{R}f_\ast \mathcal{O}_{\widetilde{Y}} (\lfloor c \cdot G \rfloor) , \omega^\bullet_{\operatorname{Spec}(\widehat{\mathcal{O}}_{Y,p})})
    \end{displaymath}
    is concentrated in degree $-\dim Y$. Moreover, as $t$ is a flat morphism between integral schemes \cite[\href{https://stacks.math.columbia.edu/tag/0AXV}{Tag 0AXV}]{StacksProject}, this nontrivial cohomology sheaf is torsion free of rank one.

    Next, from the flatness of $t$, our hypothesis implies $\widehat{\mathcal{O}}_{Y,p} \in \langle t^\ast \mathbf{R} f_\ast \mathcal{O}_{\widetilde{Y}}(\lfloor c \cdot G \rfloor) \rangle_1$ in $D^b_{\operatorname{coh}}(\widehat{\mathcal{O}}_{Y,p})$. As $Y$ is normal, we know that $\widehat{\mathcal{O}}_{Y,p}$ is a normal domain (see e.g.\ \cite[\href{https://stacks.math.columbia.edu/tag/0C23}{Tag 0C23}]{StacksProject}). So, \Cref{lem:krull_schmidt} implies that $\widehat{\mathcal{O}}_{Y,p}$ is a direct summand of $t^\ast \mathbf{R} f_\ast \mathcal{O}_{\widetilde{Y}}(\lfloor c \cdot G \rfloor)$ because $\widehat{\mathcal{O}}_{Y,p}$ is indecomposable. This gives us morphisms $\alpha \colon \widehat{\mathcal{O}}_{Y,p} \to t^\ast \mathbf{R} f_\ast \mathcal{O}_{\widetilde{Y}}(\lfloor c \cdot G \rfloor)$ and $\beta \colon t^\ast \mathbf{R} f_\ast \mathcal{O}_{\widetilde{Y}}(\lfloor c \cdot G \rfloor) \to \widehat{\mathcal{O}}_{Y,p}$ whose composition is the identity on $\widehat{\mathcal{O}}_{Y,p}$. 

    Now, we finish the proof. Set $\alpha^\prime := \operatorname{\mathbf{R}\mathcal{H}\! \mathit{om}}(\alpha,\omega^\bullet_{\operatorname{Spec}(\widehat{\mathcal{O}}_{Y,p})})$ and $\beta^\prime := \operatorname{\mathbf{R}\mathcal{H}\! \mathit{om}}(\beta,\omega^\bullet_{\operatorname{Spec}(\widehat{\mathcal{O}}_{Y,p})})$. Note that $\alpha^\prime \circ \beta^\prime$ is the identity on $\omega^\bullet_{\operatorname{Spec}(\widehat{\mathcal{O}}_{Y,p})}$. Here, $\omega^\bullet_{\operatorname{Spec}(\widehat{\mathcal{O}}_{Y,p})}$ is concentrated in degree $-\dim Y$. 
    Also, from \cite[\href{https://stacks.math.columbia.edu/tag/0AWE}{Tag 0AWE}]{StacksProject}, the stalks of $\mathcal{H}^{-\dim Y}(\omega^\bullet_{\operatorname{Spec}(\widehat{\mathcal{O}}_{Y,p})})$ has depth $\geq 2$ at all prime ideal of $\widehat{\mathcal{O}}_{Y,p}$ with height $\geq 2$.
    Moreover, observe that if we localize at any prime ideal $\mathfrak{q}$ of $\widehat{\mathcal{O}}_{Y,p}$ with height at most one, the corresponding local rings is a DVR or a field. So, for such $\mathfrak{q}$, we know $\mathcal{H}^{-\dim Y}(\beta^\prime)_{\mathfrak{q}}$ must be an isomorphism because it admits a section with source/target being torsion free of rank one. Therefore, by \cite[\href{https://stacks.math.columbia.edu/tag/0AVS}{Tag 0AVS}]{StacksProject}, $\mathcal{H}^{-\dim Y}(\beta^\prime)$ is an isomorphism. Consequently, after dualizing once more, we see that $\beta$ is an isomorphism, which completes the proof.
\end{proof}

We now turn towards the proof of \Cref{prop:pairs} that $\mathcal{O}_Y\in\langle\mathcal{J}(Y,\mathcal{I^c})\rangle_1$ for $Y$ a normal quasi-affine variety with locally complete intersection singularities. We begin with a lemma.

\begin{lemma}
    \label{lem:lci_rationality_compact}
    Consider a log resolution $f\colon \widetilde{Y}\to Y$ of $(Y,\mathcal{I}^c)$ and $Q\in \operatorname{Perf}(Y)$ such that $\operatorname{Perf}(Y) = \langle Q \rangle$ (see e.g.\ \cite[Theorem 3.1.1]{Bondal/VandenBergh:2003}). If $Y$ has locally complete intersection singularities, then $\mathcal{O}_Y \in \langle Q  \otimes^{\mathbf{L}} \mathbf{R} f_\ast \mathcal{O}_{\widetilde{Y}}(\lfloor c \cdot G \rfloor)  \rangle$.
\end{lemma}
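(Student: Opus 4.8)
Write $\mathcal{G} := \mathbf{R}f_\ast\mathcal{O}_{\widetilde{Y}}(\lfloor c\cdot G\rfloor)$, an object of $D^b_{\operatorname{coh}}(Y)$. Two support facts sit at the heart of the argument. First, exactly as in the proof of \Cref{prop:schwede_takagi_pairs}, normality of $Y$ forces $\mathcal{H}^0(\mathcal{G})\cong\mathcal{O}_Y$, so $\operatorname{Supp}\mathcal{G}=Y$. Second, $\operatorname{Supp} Q=Y$: if $Q$ were supported on a proper closed subset $Z$, then $\langle Q\rangle$ would lie in $\{E\in\operatorname{Perf}(Y):\operatorname{Supp} E\subseteq Z\}$, contradicting $\mathcal{O}_Y\in\langle Q\rangle=\operatorname{Perf}(Y)$. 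Since $Q$ is perfect, we get $\operatorname{Supp}(Q\otimes^{\mathbf{L}}\mathcal{G})=\operatorname{Supp} Q\cap\operatorname{Supp}\mathcal{G}=Y$. The plan is to upgrade $\langle Q\otimes^{\mathbf{L}}\mathcal{G}\rangle$ to a thick $\otimes$-ideal and then use the local complete intersection hypothesis to conclude that a thick $\otimes$-ideal of full support must contain $\mathcal{O}_Y$.

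The upgrade is formal. I claim $\langle Q\otimes^{\mathbf{L}}\mathcal{G}\rangle$ is a thick $\otimes$-ideal of $D^b_{\operatorname{coh}}(Y)$, not merely a thick subcategory. Fix $E\in\operatorname{Perf}(Y)$; the functor $E\otimes^{\mathbf{L}}(-)$ on $D^b_{\operatorname{coh}}(Y)$ is exact and preserves shifts, finite coproducts, and direct summands, so it carries $\langle Q\otimes^{\mathbf{L}}\mathcal{G}\rangle$ into $\langle E\otimes^{\mathbf{L}}Q\otimes^{\mathbf{L}}\mathcal{G}\rangle$. But $E\otimes^{\mathbf{L}}Q$ again lies in $\operatorname{Perf}(Y)=\langle Q\rangle$, and the exact functor $(-)\otimes^{\mathbf{L}}\mathcal{G}\colon\operatorname{Perf}(Y)\to D^b_{\operatorname{coh}}(Y)$ sends $Q$ to $Q\otimes^{\mathbf{L}}\mathcal{G}$, hence sends all of $\langle Q\rangle$ into $\langle Q\otimes^{\mathbf{L}}\mathcal{G}\rangle$; thus $E\otimes^{\mathbf{L}}Q\otimes^{\mathbf{L}}\mathcal{G}\in\langle Q\otimes^{\mathbf{L}}\mathcal{G}\rangle$, whence $E\otimes^{\mathbf{L}}\langle Q\otimes^{\mathbf{L}}\mathcal{G}\rangle\subseteq\langle Q\otimes^{\mathbf{L}}\mathcal{G}\rangle$. (This is the whole point of tensoring with a classical generator of $\operatorname{Perf}(Y)$; for $Y$ affine one takes $Q=\mathcal{O}_Y$ and there is nothing to check.) So $\langle Q\otimes^{\mathbf{L}}\mathcal{G}\rangle$ is a thick $\otimes$-ideal of $D^b_{\operatorname{coh}}(Y)$ of full support.

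Now I would use that $Y$ is a local complete intersection. By the classification of thick $\otimes$-ideals of $D^b_{\operatorname{coh}}(Y)$ in this case --- which rests on Pollitz's homotopical characterization of local complete intersections \cite{Pollitz:2019}, namely that over each of the (complete intersection) local rings of $Y$ every object of the bounded derived category is virtually small --- any thick $\otimes$-ideal of $D^b_{\operatorname{coh}}(Y)$ whose support is all of $Y$ contains $\operatorname{Perf}(Y)$. The reason is that in the classification the unit $\mathcal{O}_Y$, being perfect, carries no ``singularity support'', so the only obstruction to $\mathcal{O}_Y$ lying in a thick $\otimes$-ideal is its ordinary support. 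Concretely, virtual smallness supplies a nonzero perfect complex in $\langle Q\otimes^{\mathbf{L}}\mathcal{G}\rangle$, and the classification then forces the perfect complexes in this full-support thick $\otimes$-ideal to exhaust $\operatorname{Perf}(Y)$ (using also Thomason's classification of thick $\otimes$-ideals of $\operatorname{Perf}(Y)$). In particular $\mathcal{O}_Y\in\langle Q\otimes^{\mathbf{L}}\mathcal{G}\rangle$, which is the assertion.

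The delicate step is the last paragraph. Pollitz's theorem is usually recorded affine-locally and only guarantees a \emph{nonzero} perfect complex in the relevant thick subcategory, whereas what is needed here is control of its support --- a perfect complex supported on all of $Y$. I expect the main work to lie in globalizing this: either through a spreading-out argument over a finite affine cover (it suffices to reach the generic point of $Y$, since the support of a perfect complex is closed), or through the scheme-level classification of thick $\otimes$-ideals for local complete intersections, which refines and globalizes Pollitz's result. By contrast, the first two paragraphs are purely formal and use only normality of $Y$ together with $\operatorname{Perf}(Y)=\langle Q\rangle$. Via Grothendieck duality the lemma is equivalent to the finiteness of $\operatorname{level}$ of $\omega_Y$ relative to a perfect twist of the multiplier submodule $\mathcal{J}(\omega_Y,\mathcal{I}^c)$, as needed for \Cref{thm:pairs}.
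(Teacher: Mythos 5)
Your first two paragraphs are fine, and the tensor-ideal observation is exactly the reason the statement involves $Q\otimes^{\mathbf{L}}\mathbf{R}f_\ast\mathcal{O}_{\widetilde{Y}}(\lfloor c\cdot G\rfloor)$ rather than the pushforward alone. But the third paragraph, which carries all the weight, is not a proof: you invoke ``the classification of thick $\otimes$-ideals of $D^b_{\operatorname{coh}}(Y)$'' for locally complete intersection $Y$ and assert that any full-support thick $\otimes$-ideal contains $\operatorname{Perf}(Y)$, without proving it or citing a precise statement, and you yourself flag the globalization as the ``delicate step'' you expect to require main work. That is precisely the missing content. What one actually needs is to pass from a stalkwise statement (Pollitz over each local ring $\mathcal{O}_{Y,p}$, which is a complete intersection) to the global containment $\mathcal{O}_Y\in\langle Q\otimes^{\mathbf{L}}\mathbf{R}f_\ast\mathcal{O}_{\widetilde{Y}}(\lfloor c\cdot G\rfloor)\rangle$; your two suggested fixes (spreading out a perfect complex from a local ring to an affine cover, or a scheme-level classification) are each nontrivial results that you neither prove nor pin down, so the argument has a genuine gap at its central step.

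For comparison, the paper's proof runs entirely through stalks and then applies a ready-made local-to-global theorem: at each $p\in Y$, \cite[Theorem 5.2]{Pollitz:2019} produces a perfect complex $P$ over $\mathcal{O}_{Y,p}$ lying in $\langle t^\ast\mathbf{R}f_\ast\mathcal{O}_{\widetilde{Y}}(\lfloor c\cdot G\rfloor)\rangle$ with $\operatorname{Supp}(P)$ equal to the (full) support of the stalk --- so your worry that Pollitz ``only guarantees a nonzero perfect complex'' without support control is unfounded at the local level --- whence $\mathcal{O}_{Y,p}\in\langle t^\ast\mathbf{R}f_\ast\mathcal{O}_{\widetilde{Y}}(\lfloor c\cdot G\rfloor)\rangle$ by \cite[Lemma 1.2]{Neeman:1992}; the globalization you could not supply is exactly \cite[Theorem 1.7]{BILMP:2023}, which converts the stalkwise containments into $\mathcal{O}_Y\in\langle Q\otimes^{\mathbf{L}}\mathbf{R}f_\ast\mathcal{O}_{\widetilde{Y}}(\lfloor c\cdot G\rfloor)\rangle$. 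If you want to salvage your route, you should either quote that theorem (at which point your $\otimes$-ideal discussion becomes unnecessary) or prove the full-support perfect-complex claim globally, e.g.\ by combining a spreading-out argument with Thomason's classification of $\otimes$-ideals of $\operatorname{Perf}(Y)$; as written, neither is done.
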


\begin{proof}
    Let $p\in Y$. Denote by $t\colon \operatorname{Spec}(\mathcal{O}_{Y,p}) \to Y$ the natural morphism. We know that $\mathcal{O}_{Y,p}$ is the affine spectrum of a local complete intersection. So, by \cite[Theorem 5.2]{Pollitz:2019}, there is a $P\in \operatorname{Perf}(\mathcal{O}_{Y,p})$ such that $\operatorname{Supp}(P)=\operatorname{Supp} (t^\ast \mathbf{R} f_\ast \mathcal{O}_{\widetilde{Y}}(\lfloor c \cdot G \rfloor))$ and $P\in \langle t^\ast \mathbf{R} f_\ast \mathcal{O}_{\widetilde{Y}}(\lfloor c \cdot G \rfloor) \rangle$. Since $t^\ast \mathbf{R} f_\ast \mathcal{O}_{\widetilde{Y}}(\lfloor c \cdot G \rfloor)$ has full support, \cite[Lemma 1.2]{Neeman:1992} tells us $\mathcal{O}_{Y,p} \in \langle t^\ast \mathbf{R} f_\ast \mathcal{O}_{\widetilde{Y}}(\lfloor c \cdot G \rfloor) \rangle$ because $\mathcal{O}_{Y,p} \in \langle P \rangle$. Then, by \cite[Theorem 1.7]{BILMP:2023}, we are done.
\end{proof}

\begin{corollary}
    \label{cor:lci_rationality}
    Suppose $Y$ admits a very ample line bundle $\mathcal{L}$. Consider a log resolution $f\colon \widetilde{Y}\to Y$ of $(Y,\mathcal{I}^c)$. If $Y$ has locally complete intersection singularities, then there is a smallest $N\geq 0$ such that
    \begin{displaymath}
        \mathcal{O}_Y \in \langle \bigoplus^{\dim Y }_{i=0} \mathcal{L}^{\otimes i} \otimes^{\mathbf{L}} \mathbf{R} f_\ast \mathcal{O}_{\widetilde{Y}}(\lfloor c \cdot G \rfloor)  \rangle_{N+1}.
    \end{displaymath}
    Moreover, in such cases, $N=0$ if, and only if, $(Y,\mathcal{I}^c)$ has rational singularities. Additionally, if $Y$ is quasi-affine, then the value $\operatorname{level}^{\mathbf{R} f_\ast \mathcal{O}_{\widetilde{Y}}(\lfloor c \cdot G \rfloor)} (\mathcal{O}_Y)$ is finite. 
\end{corollary}

\begin{proof}
    The first claim is a special case of \Cref{lem:lci_rationality_compact} coupled with \cite[Theorem 4]{Orlov:2009}. Indeed, loc.\ cit.\ ensures that we can take $Q= \oplus^{\dim Y}_{i=0} \mathcal{L}^{\otimes i}$. 
    
    Now, we check the second claim. If $(Y,\mathcal{I}^c)$ has rational singularities, then \Cref{thm:schwede_takagi_pairs} implies $N=0$. Conversely, if $N=0$, then for each $p\in Y$, we see that
    \begin{displaymath}
       \mathcal{O}_{Y,p} \in \langle t_p^\ast (\bigoplus^{\dim Y }_{i=0} \mathcal{L}^{\otimes i} \otimes^{\mathbf{L}} \mathbf{R} f_\ast \mathcal{O}_{\widetilde{Y}}(\lfloor c \cdot G \rfloor) ) \rangle_1
    \end{displaymath}
    where $t_p\colon \operatorname{Spec}(\mathcal{O}_{Y,p}) \to Y$ is the natural morphism. However, as $\mathcal{L}$ is a line bundle, we have an isomorphism
    \begin{displaymath}
        t_p^\ast (\bigoplus^{\dim Y }_{i=0} \mathcal{L}^{\otimes i} \otimes^{\mathbf{L}} \mathbf{R} f_\ast \mathcal{O}_{\widetilde{Y}}(\lfloor c \cdot G \rfloor) ) \cong t_p^\ast \mathbf{R} f_\ast \mathcal{O}_{\widetilde{Y}}(\lfloor c \cdot G \rfloor)^{\oplus 1+ \dim Y}.
    \end{displaymath}
    Hence, $\mathcal{O}_{Y,p} \in \langle t_p^\ast \mathbf{R} f_\ast \mathcal{O}_{\widetilde{Y}}(\lfloor c \cdot G \rfloor) \rangle_1$. By base changing along the natural morphism $\operatorname{Spec}(\widehat{\mathcal{O}}_{Y,p}) \to \operatorname{Spec}(\mathcal{O}_{Y,p})$, we can argue exactly as in \Cref{thm:schwede_takagi_pairs} to show $\mathcal{O}_Y \in \langle \mathbf{R} f_\ast \mathcal{O}_{\widetilde{Y}}(\lfloor c \cdot G \rfloor) \rangle_1$, which tells us that $(Y,\mathcal{I}^c)$ has rational singularities.

    That the last claim holds follows from \Cref{lem:lci_rationality_compact} because $\langle \mathcal{O}_Y\rangle = \operatorname{Perf}(Y)$ (see e.g.\ \cite[\href{https://stacks.math.columbia.edu/tag/0BQQ}{Tag 0BQQ}]{StacksProject}). 
\end{proof}

\begin{lemma}
    \label{lem:rationality_CM_via_multiplier_submodule}
    Suppose $Y$ is Cohen--Macaulay. Consider a log resolution $f\colon \widetilde{Y}\to Y$ of $(Y,\mathcal{I}^c)$. Then the value $\operatorname{level}^{\mathbf{R} f_\ast \mathcal{O}_{\widetilde{Y}}(\lfloor c \cdot G \rfloor)} (\mathcal{O}_Y)$ coincides with $\operatorname{level}^{\mathcal{J}(\omega_Y,\mathcal{I}^c)} (\omega_Y)$.
\end{lemma}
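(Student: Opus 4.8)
\textit{Proof proposal.}

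The plan is to transport the level computation across the duality antiequivalence
$\mathbb{D}_Y := \operatorname{\mathbf{R}\mathcal{H}\! \mathit{om}}_{\mathcal{O}_Y}(-,\omega^\bullet_Y)$ of $D^b_{\operatorname{coh}}(Y)$ (see \Cref{sec:prelim_dualizing_complexes}), reusing the duality isomorphism already established inside the proof of \Cref{prop:schwede_takagi_pairs}. Two facts I would isolate at the outset are: (i) a contravariant exact autoequivalence of a triangulated category carries $\langle G\rangle_n$ onto $\langle \mathbb{D}_Y G\rangle_n$ for every object $G$ and every $n$ — this follows by induction on $n$ from the facts that $\mathbb{D}_Y$ sends distinguished triangles to distinguished triangles, preserves shifts, sends finite biproducts to finite biproducts (in $D^b_{\operatorname{coh}}(Y)$ finite products and coproducts coincide, so a contravariant functor respects the operation defining $\operatorname{add}$), and preserves the property of being a direct summand — so in particular $\operatorname{level}^{G}(E) = \operatorname{level}^{\mathbb{D}_Y G}(\mathbb{D}_Y E)$; and (ii) the level is insensitive to shifts of either argument, i.e.\ $\operatorname{level}^{G[i]}(E[j]) = \operatorname{level}^{G}(E)$, which is immediate since $\operatorname{add}(G[i]) = \operatorname{add}(G)$ and each $\langle G\rangle_n$ is closed under shifts.

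With these in hand, I would identify the images of the two relevant objects. Since $Y$ is Cohen--Macaulay and, being a variety, is integral of finite type over a field, hence equidimensional, its dualizing complex satisfies $\omega^\bullet_Y \cong \omega_Y[\dim Y]$ with $\omega_Y$ the canonical module placed in degree $-\dim Y$; thus $\mathbb{D}_Y(\mathcal{O}_Y) = \operatorname{\mathbf{R}\mathcal{H}\! \mathit{om}}_{\mathcal{O}_Y}(\mathcal{O}_Y,\omega^\bullet_Y) \cong \omega_Y[\dim Y]$. For the second object, the proof of \Cref{prop:schwede_takagi_pairs} — via Grothendieck duality applied to $\widetilde{Y}\to\operatorname{Spec}(k)$ together with \cite[Tag 0BV8]{StacksProject}, all valid for an arbitrary pair and log resolution — supplies an isomorphism
\begin{displaymath}
\operatorname{\mathbf{R}\mathcal{H}\! \mathit{om}}_{\mathcal{O}_Y}\big(\mathbf{R}f_\ast\mathcal{O}_{\widetilde{Y}}(\lfloor c\cdot G\rfloor),\,\omega^\bullet_Y\big) \cong \mathbf{R}f_\ast\big(\mathcal{O}_{\widetilde{Y}}(-\lfloor c\cdot G\rfloor)\otimes^{\mathbf{L}}\omega_{\widetilde{Y}}\big)[\dim Y].
\end{displaymath}
By \cite[Lemma 3.5]{Schwede/Takagi:2008} the complex on the right is concentrated in degree zero with cohomology the multiplier submodule $\mathcal{J}(\omega_Y,\mathcal{I}^c)$, so $\mathbb{D}_Y\big(\mathbf{R}f_\ast\mathcal{O}_{\widetilde{Y}}(\lfloor c\cdot G\rfloor)\big) \cong \mathcal{J}(\omega_Y,\mathcal{I}^c)[\dim Y]$.

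Finally I would assemble the pieces: applying $\mathbb{D}_Y$ and invoking (i) and (ii),
\begin{displaymath}
\operatorname{level}^{\mathbf{R}f_\ast\mathcal{O}_{\widetilde{Y}}(\lfloor c\cdot G\rfloor)}(\mathcal{O}_Y) = \operatorname{level}^{\mathcal{J}(\omega_Y,\mathcal{I}^c)[\dim Y]}(\omega_Y[\dim Y]) = \operatorname{level}^{\mathcal{J}(\omega_Y,\mathcal{I}^c)}(\omega_Y),
\end{displaymath}
which is the asserted equality (and both sides are $\infty$ simultaneously, since $\mathbb{D}_Y$ is an equivalence). I do not expect a genuine obstacle here; the only points requiring care are the verification of (i) — routine once one observes the compatibility of a contravariant equivalence with the inductive construction of $\langle\cdot\rangle_n$ — and the bookkeeping of the shift by $\dim Y$, which matches exactly the normalization of $\mathcal{J}(\omega_Y,\mathcal{I}^c)$ fixed in \Cref{sec:prelim_schwede_takagi}.
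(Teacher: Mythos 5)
Your proposal is correct and follows essentially the same route as the paper: apply the duality antiequivalence $\operatorname{\mathbf{R}\mathcal{H}\! \mathit{om}}(-,\omega^\bullet_Y)$, identify $\operatorname{\mathbf{R}\mathcal{H}\! \mathit{om}}(\mathbf{R}f_\ast\mathcal{O}_{\widetilde{Y}}(\lfloor c\cdot G\rfloor),\omega^\bullet_Y)$ with a shift of $\mathcal{J}(\omega_Y,\mathcal{I}^c)$ via Grothendieck duality and \cite[Lemma 3.5]{Schwede/Takagi:2008}, use Cohen--Macaulayness to write $\omega^\bullet_Y\cong\omega_Y[\dim Y]$, and conclude by the invariance of level under the antiequivalence and under shifts. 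The only difference is that you verify the level-invariance facts by hand where the paper cites \cite[Lemma 2.4(5)]{Avramov/Buchweitz/Iyengar/Miller:2010}.
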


\begin{proof}
    This comes directly from a few standard facts, which we spell out. There is a string of isomorphisms:
    \begin{displaymath}
        \begin{aligned}
            \mathcal{J}(\omega_Y,\mathcal{I}^c)
            &\cong \mathbf{R} f_\ast (\mathcal{O}_{\widetilde{Y}}(- \lfloor c \cdot G \rfloor) \otimes^{\mathbf{L}} \omega_{\widetilde{Y}}) && \textrm{(\cite[Lemma 3.5]{Schwede/Takagi:2008})}
            \\&\cong \operatorname{\mathbf{R}\mathcal{H}\! \mathit{om}} (\mathbf{R}f_\ast \mathcal{O}_{\widetilde{Y}} (\lfloor c \cdot G \rfloor) , \omega^\bullet_Y) [ - \dim Y].
        \end{aligned}
    \end{displaymath}
    Moreover, by duality, we have a triangulated equivalence
    \begin{equation}
        \label{eq:rational_entropy_properties}
        \operatorname{\mathbf{R}\mathcal{H}\! \mathit{om}} ( -, \omega_Y^\bullet) \colon D^b_{\operatorname{coh}}(Y) \to \big( D^b_{\operatorname{coh}}(Y) \big)^{op}
    \end{equation}
    with the opposite category. Also, being that $Y$ is Cohen--Macaulay, we know that $\omega^\bullet_Y = \omega_Y[\dim Y]$. Therefore, using \Cref{eq:rational_entropy_properties}, the desired claim follows from the string of equalities:
    \begin{displaymath}
        \begin{aligned}
            \operatorname{level}^{\mathbf{R} f_\ast \mathcal{O}_{\widetilde{Y}}(\lfloor c \cdot G \rfloor)} (\mathcal{O}_Y) 
            &= \operatorname{level}^{\mathcal{J}(\omega_Y,\mathcal{I}^c)} (\omega_Y) && (\textrm{in } \big( D^b_{\operatorname{coh}}(Y) \big)^{op})
            \\&= \operatorname{level}^{\mathcal{J}(\omega_Y,\mathcal{I}^c)} (\omega_Y) && (\textrm{in } D^b_{\operatorname{coh}}(Y))
        \end{aligned}
    \end{displaymath}
    where the last equality uses \cite[Lemma 2.4(5)]{Avramov/Buchweitz/Iyengar/Miller:2010}.
\end{proof}

\begin{proof}
    [Proof of \Cref{prop:pairs}]
    As $Y$ has local complete intersection singularities, it is Cohen--Macaulay, and hence this is immediate from \Cref{thm:schwede_takagi_pairs}, \Cref{cor:lci_rationality}, and \Cref{lem:rationality_CM_via_multiplier_submodule}.
\end{proof}

\begin{appendix}

\section{Koll\'{a}r--Kov\'{a}cs rational pairs}
\label{app:kollar_kovacs_pairs}

We briefly discuss variations of our results in the context of rational pairs due to Koll\'{a}r--Kov\'{a}cs \cite{Kollar:2013}. 

\subsection{Reminder}
\label{app:kollar_kovacs_pairs_reminder}

Recall that a \textbf{reduced pair} consists of a normal variety $Y$ over a field $k$ of characteristic zero and a Weil divisor $D$ on $Y$ whose coefficients are all one. This datum is denoted by $(Y,D)$. A \textbf{thrifty resolution} for $(Y,D)$ is a proper birational morphism $f\colon \widetilde{Y}\to Y$ from a smooth variety $\widetilde{Y}$ (over $k$) such that the birational transform\footnote{$D_Y$ is defined as the closure in $Y$ of the image of $D$ under the inverse function $f^{-1}$ of $f$ defined over the largest open subset $U$ of $X$ which $f$ is an isomorphism over (see \cite[Definition 1.11]{Kollar:2013} or \cite[Definition 1.1]{Erickson:2014}).} $D_Y$ of $D$ is a simple normal crossing divisor, $\operatorname{exc}(f)$ does not contain any stratum of $(Y,D_Y)$, and $f(\operatorname{exc}(f))$ does not contain any stratum of the simple normal crossing divisor locus of $(Y,D_Y)$. Recall that a \textbf{stratum}, in the sense of \cite[Definition 1.7]{Kollar:2013}, is an irreducible component of $\bigcap_{i\in I} D_i$ where $I$ is a subset of $J$ and $D_Y=\sum_{j\in J} D_j$. Also, the simple normal crossing divisor locus of $(Y,D_Y)$ is the largest open subscheme $U$ of $Y$ such that $(U,D|_U)$ is a simple normal crossing divisor pair in the sense of \cite[Definition 1.7]{Kollar:2013}.

We say a reduced pair $(Y,D)$ has \textbf{rational singularities \`{a} la Koll\'{a}r--Kov\'{a}cs} if the natural map $\mathcal{O}_Y (-D)\xrightarrow{ntrl.} \mathbf{R}f_\ast \mathcal{O}_{\widetilde{Y}} (-D_Y)$ is an isomorphism for some thrifty resolution $f\colon \widetilde{Y} \to Y$ of $(Y,D)$. By \cite[Corollary 2.86]{Kollar:2013}, this notion is independent of the thrifty resolution of the reduced pair. Note that the notion of a Koll\'{a}r--Kov\'{a}cs rational pair does not necessarily coincide with the notion of a Schwede--Takagi rational pair explored in this paper. For example, Koll\'{a}r--Kov\'{a}cs rationality does not neccessarily imply the ambient variety has rational singularities (e.g.\ see \cite[Remark 2.81]{Kollar:2013}). On the other hand, by \cite[Corollary 3.13]{Schwede/Takagi:2008}, Schwede--Takagi rationality enjoys this property.

\subsection{Variations}
\label{app:kollar_kovacs_pairs_variations}

Let $(Y,D)$ be a reduced pair. We state a `splitting criteria' for having rational singularities \`{a} la Koll\'{a}r--Kov\'{a}cs in this context. The following is an expected variation of \cite[Theorem 1]{Kovacs:2000}, \cite[Theorem 2.12]{Bhatt:2012} and \cite[Theorem 9.5]{Murayama:2021} in this setting. Its proof follows in a similar vein, so we only give a sketch. In fact, this is known to experts, but we record it for lack of reference.

\begin{proposition}
    \label{prop:kovacs_splitting_rational_pairs}
    A reduced pair $(Y,D)$ is rational \`{a} la Koll\'{a}r--Kov\'{a}cs if, and only if, the morphism $\mathcal{O}_Y (-D)\xrightarrow{ntrl.} \mathbf{R} f_\ast \mathcal{O}_{\widetilde{Y}} (-D_Y)$ splits for some thrifty resolution $ f\colon \widetilde{Y}\to Y$ of $(Y,D)$.
\end{proposition}

\begin{proof}
    It is clear that $(Y,D)$ being rational \`{a} la Koll\'{a}r--Kov\'{a}cs implies $\mathcal{O}_Y (-D)\xrightarrow{ntrl.} \mathbf{R} f_\ast \mathcal{O}_{\widetilde{Y}} (-D_Y)$ splits for some thrifty resolution $f\colon \widetilde{Y}\to Y$ of $(Y,D)$. We check the converse. Suppose there is a thrifty resolution $f\colon \widetilde{Y}\to Y$ of $(Y,D)$ such that $\xi\colon \mathcal{O}_Y (-D) \xrightarrow{ntrl.} \mathbf{R} f_\ast \mathcal{O}_{\widetilde{Y}} (-D_Y)$ splits in $D^b_{\operatorname{coh}}(Y)$. Recall the following vanishing for $j>0$ holds (see e.g.\ \cite[Lemma 2.5]{Kovacs/Schwede:2016}, \cite[Theorem 10.39]{Kollar:2013}, or \cite[Proposition 3.6]{Erickson:2014}):
    \begin{displaymath}
        \begin{aligned}
            0&= \mathbf{R}^j  f_\ast (\omega_{\widetilde{Y}} \otimes^\mathbf{L} \mathcal{O}_{\widetilde{Y}} (D_Y)) \\&\cong \mathbf{R}^j  f_\ast \big( \operatorname{\mathbf{R}\mathcal{H}\! \mathit{om}}_{\widetilde{Y}}(\mathcal{O}_{\widetilde{Y}} (-D_Y), \omega_{\widetilde{Y}}) \big).
        \end{aligned}
    \end{displaymath}
    Then the desired claim holds by \cite[Theorem 2.74]{Kollar:2013}.
\end{proof}

\begin{theorem}
    \label{thm:kollar_kovac_pairs}
    A reduced pair $(Y,D)$ is rational \`{a} la Koll\'{a}r--Kov\'{a}cs if, and only if, $\mathcal{O}_Y (-D)$ is an object of $\langle \mathbf{R} f_\ast \mathcal{O}_Y (-D_Y)\rangle_1$ for some thrifty resolution $f\colon \widetilde{Y} \to Y$ of $(Y,D)$.
\end{theorem}

\begin{proof}
    This follows from an argument similar to that in the proof of \Cref{thm:schwede_takagi_pairs}, combined with \Cref{prop:kovacs_splitting_rational_pairs}. 
\end{proof}

\end{appendix}

\bibliographystyle{alpha}
\bibliography{mainbib}

\end{document}